\documentclass[12pt,a4paper]{amsart}

\usepackage[utf8]{inputenc}

\usepackage{accents}



\setlength{\textwidth}{15.83cm}
\setlength{\hoffset}{-1.5cm}

\usepackage{mathrsfs}
\usepackage{todonotes}
\usepackage[utf8]{inputenc}

\usepackage{amssymb, tikz}

\usepackage{hyperref}

\usepackage{amsmath}

\newtheorem{theorem}{Theorem}[section]

\newtheorem{lemma}[theorem]{Lemma}

\theoremstyle{definition}
\newtheorem{definition}[theorem]{Definition}

\theoremstyle{remark}
\newtheorem{remark}[theorem]{Remark}

\DeclareMathOperator{\GCH}{GCH}
\DeclareMathOperator{\PFA}{PFA}

\DeclareMathOperator{\WCG}{WCG}

\newcommand{\CH}{{\rm CH}}

\newcommand{\Lim}{{\rm Lim}}

\newcommand{\dom}{{\rm dom}}

\newcommand{\bbR}{{\mathbb R}}

\newcommand{\cH}{{\mathscr H}}

\newcommand{\bbP}{{\mathbb P}}

\newcommand{\bbQ}{{\mathbb Q}}

\newcount\skewfactor
\def\mathunderaccent#1#2 {\let\theaccent#1\skewfactor#2
\mathpalette\putaccentunder}
\def\putaccentunder#1#2{\oalign{$#1#2$\crcr\hidewidth
\vbox to.2ex{\hbox{$#1\skew\skewfactor\theaccent{}$}\vss}\hidewidth}}
\def\name{\mathunderaccent\tilde-3 }

\begin{document}

\title[A generic absoluteness principle consistent with  large continuum]{A generic absoluteness principle consistent with large continuum}

\author[M. Golshani]{Mohammad Golshani}
\address{School of Mathematics\\
 Institute for Research in Fundamental Sciences (IPM)\\
  P.O. Box:
19395-5746\\
 Tehran-Iran.}
\email{golshani.m@gmail.com}
\urladdr{http://math.ipm.ac.ir/~golshani/}

\thanks{The author's research has been supported by a grant from IPM (No. 1402030417). The results of this paper are motivated from discussions with
David Asper\'{o}, to whom the author is very grateful. In particular, the current presentation of Theorem \ref{thm2} is suggested by him.
He also thanks David Asper\'{o} and Rahman Mohammadpour for their useful comments.}



\subjclass[2010]{Primary: 03E50, 03E35, 03E65}

\keywords { large continuum, generic absoluteness, memory iteration, $\aleph_2$-p.i.c.}

\begin{abstract}
We state a new generic absoluteness principle, and use Shelah's memory iteration technique to show that it is consistent with the large continuum.
\end{abstract}

\maketitle
\setcounter{section}{-1}


\section{Introduction}
Generic absoluteness principles are widely studied in set theory. They assert that certain properties of the
set-theoretic universe cannot be changed by forcing.
For instance Shoenfield in \cite{shoenfield}, showed that if a sentence $\psi$ is $\Sigma^1_2$ or $\Pi^1_2$, it is impossible to reach
a forcing extension in which $\psi$ holds if it does not already hold in the ground model.
See   \cite{aspero}, \cite{bagaria1}, \cite{bagaria2},
\cite{bagaria4}, \cite{bagaria5}, \cite{bagaria3}, \cite{friedman}, \cite{To1}, \cite{viale}, \cite{wilson}
and \cite{woodin1} for more on the subject.

In this paper we introduce another generic absoluteness principle and show its consistency with large values of the continuum.
Our main result is the following:
\begin{theorem}\label{thm2} Assume $\GCH$. Let $\kappa\geq\aleph_2$ be a regular cardinal. Then there is a proper partial order $\bbP$ with the $\aleph_2$-chain condition and forcing the following statements.
\begin{enumerate}
\item[$(\dag)$] $2^{\aleph_0} = \kappa$,

\item[$(\ddag)$] Suppose $\varphi(x, y)$ is a restricted formula in the language of set theory. Suppose for every $a\in \cH(\aleph_2)$ and every inner model  $M$ of the form $M=\bold V[\bold H]$, where $\bold H$ is $\bbR$-generic over $\bold V$, for some $\bbR \lessdot \bbP$,
    if $x\in M$, $\omega_1^M=\omega_1$, and $M\models\CH$, then there is an $\bbR$-name $\name{\bbQ}$ such that $\name{\bbQ}$ is $(\bbR, \bbP)$-proper   \footnote{See Section \ref{perl} for the definition of $(\bbR, \bbP)$-properness.} with the $\aleph_2$-p.i.c.\ and forcing $\cH(\aleph_2)\models (\exists y)\varphi(a, y)$. Then $\cH(\aleph_2)\models(\forall x)(\exists y)\varphi(x, y)$.
\end{enumerate}
\end{theorem}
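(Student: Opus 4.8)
The plan is to build $\bbP$ as a memory iteration (in Shelah's sense) of length $\kappa$, say $\bbP = \langle \bbP_\alpha, \name{\bbQ}_\alpha : \alpha < \kappa\rangle$, in which every iterand $\name{\bbQ}_\alpha$ is proper with the $\aleph_2$-p.i.c.\ and the memory---the set of earlier coordinates on which a condition may be nontrivial---is governed by a symmetric function chosen so that the whole iteration again carries the $\aleph_2$-p.i.c. Using $\GCH$ I would fix a bookkeeping function $F$ that anticipates, cofinally often below $\kappa$, every triple consisting of a parameter $a$, a complete suborder $\bbR\lessdot\bbP$, and an $\bbR$-name $\name{\bbQ}$ for a forcing of the kind named in $(\ddag)$. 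At stage $\alpha$ I would set $\name{\bbQ}_\alpha$ equal to the forcing delivered by $F(\alpha)$ whenever that object is (a name for) a legitimate witness-adding forcing---namely an $\bbR$-name that is $(\bbR,\bbP)$-proper with the $\aleph_2$-p.i.c.\ and forces $\cH(\aleph_2)\models(\exists y)\varphi(a, y)$---and trivial otherwise.

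The first block of verifications concerns the forcing-theoretic properties of $\bbP$. Properness of the memory iteration follows from properness of the iterands via the preservation theorem for memory iterations, so $\omega_1^{\bold V[G]}=\omega_1$. The decisive point is that the $\aleph_2$-p.i.c.\ is preserved by the memory iteration when every iterand has it and the memory function is symmetric; this yields the $\aleph_2$-chain condition for $\bbP$, and together with $\GCH$ and the regularity of $\kappa$ (so that $\kappa^{\aleph_1}=\kappa$) it gives $2^{\aleph_0}\le\kappa$ by a nice-name count. Arranging that cofinally many iterands add reals---by interleaving Cohen forcing, or simply because the witness-forcings do---forces $2^{\aleph_0}\ge\kappa$, establishing $(\dag)$.

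For $(\ddag)$, suppose in $\bold V[G]$ that the hypothesis holds and fix an arbitrary $x\in\cH(\aleph_2)^{\bold V[G]}$. Because $\bbP$ has the $\aleph_2$-c.c.\ and is proper, $x$ together with a suitable inner model $M=\bold V[\bold H]$ with $\bbR\lessdot\bbP$, $\omega_1^M=\omega_1$ and $M\models\CH$ is captured by $F$ at some stage $\alpha<\kappa$; by the hypothesis there is at that stage an $\bbR$-name $\name{\bbQ}$ that is $(\bbR,\bbP)$-proper with the $\aleph_2$-p.i.c.\ and forces $\cH(\aleph_2)\models(\exists y)\varphi(x, y)$. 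By construction $\name{\bbQ}_\alpha$ is (an isomorphic copy of) this forcing, so $G$ adds a witness $b$ with $\varphi(x, b)$ holding in the $\cH(\aleph_2)$ of the corresponding intermediate model. Since $\varphi$ is a restricted formula, $\varphi(x, b)$ is absolute between the transitive sets $\cH(\aleph_2)$ of the intermediate model and of $\bold V[G]$, whence $\cH(\aleph_2)^{\bold V[G]}\models(\exists y)\varphi(x, y)$. As $x$ was arbitrary, $\cH(\aleph_2)^{\bold V[G]}\models(\forall x)(\exists y)\varphi(x, y)$.

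The hard part will be the preservation of the $\aleph_2$-p.i.c.\ through the memory iteration, entangled with the self-referential shape of the hypothesis: the witness-forcing is required to be $(\bbR,\bbP)$-proper \emph{relative to the full $\bbP$} and to force a statement about the final $\cH(\aleph_2)$, yet $\bbP$ is only being built. This is precisely what the memory structure is for, since, unlike a linear countable-support iteration, it lets one attach the forcing associated to an arbitrary $\bbR\lessdot\bbP$ at a later coordinate while controlling the support so that the p.i.c.\ isomorphisms lift correctly; making the bookkeeping, the memory function, and the $(\bbR,\bbP)$-properness hypothesis cohere so that the composite stays proper and $\aleph_2$-p.i.c.\ is the technical heart of the argument. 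A secondary difficulty is to match each $\bbR\lessdot\bbP$ appearing in $(\ddag)$, via the $\aleph_2$-c.c., to a suborder living inside some $\bbP_\alpha$, so that the attendant $M=\bold V[\bold H]$ is genuinely an intermediate model of the iteration to which the reflection argument applies.
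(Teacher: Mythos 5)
Your overall architecture is the paper's: a countable-support memory iteration of length $\kappa$ with a bookkeeping function anticipating triples, iterands taken to be the witness-adding forcings when legitimate and trivial otherwise, the $\aleph_2$-c.c.\ used to capture an arbitrary $x\in\cH(\aleph_2)^{\bold V[G]}$ at some stage, and absoluteness of the restricted formula $\varphi$ to pull the witness up to the final model. The one place where you deviate, and where your plan as stated would break, is the chain condition. You make ``preservation of the $\aleph_2$-p.i.c.\ by the memory iteration'' the decisive point, but for $\kappa>\aleph_2$ the full iteration $\bbP_\kappa$ cannot be expected to have the p.i.c.\ at all: the standard preservation theorem for the p.i.c.\ only works for iterations of length $\le\omega_2$ (under $\CH$), and that is exactly why the paper does something weaker and more local. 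In the paper each memory $\mathcal U_\beta$ is required to have size $\le\aleph_1$ and to be downward closed ($\xi\in\mathcal U_\beta\Rightarrow\mathcal U_\xi\subseteq\mathcal U_\beta$); for any such closed $A$ of size $\le\aleph_1$ the restricted poset $\bbP_A$ is essentially an iteration of length $<\omega_2$ of p.i.c.\ iterands, so it has the $\aleph_2$-c.c.\ and preserves $\CH$. The $\aleph_2$-c.c.\ of the whole $\bbP_\kappa$ is then obtained by a $\Delta$-system argument: given $\aleph_2$ many conditions, absorb the root of the $\Delta$-system into a single $\bbP_{A_{i_*}}$ with $|A_{i_*}|\le\aleph_1$, find two conditions whose restrictions to $A_{i_*}$ are compatible there, and observe that the full conditions are then compatible. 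So the p.i.c.\ is used only on the small pieces, never globally. Relatedly, your bookkeeping should hand the construction not an arbitrary $\bbR\lessdot\bbP$ but a closed set of coordinates $A\in[\beta]^{\le\aleph_1}$ together with a $\bbP^\beta_A$-name $\name x$, and the properness requirement imposed at stage $\beta$ is $(\bbP^\beta_A,\bbP_\beta)$-properness (relative to the initial segment, which is available at stage $\beta$), not $(\bbR,\bbP)$-properness relative to the yet-unbuilt $\bbP$; the hypothesis of $(\ddag)$ is then invoked at the end to guarantee that for the captured $(A,\varphi,\name x)$ a legitimate iterand really did exist at the relevant stage. With those two corrections your outline matches the paper's proof; the remaining unproved ingredients (properness of the memory iteration from $(\bbP_{\mathcal U_\beta},\bbP_\beta)$-properness of the iterands, and the two chain-condition lemmas) are exactly the paper's Theorem \ref{properness}, Lemma \ref{lem1} and Theorem \ref{lem3}.
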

\begin{remark}
A generalization of the above theorem has recently announced in \cite{aspero-gol}. However our proof is here completely different, and might be of some interest. In particular, we develop a general theory of countable support memory iteration, and prove some general results about it.
\end{remark}
The proof uses a countable support version of Shelah's memory iteration. See  \cite{golshani}, \cite{sh1102}, \cite{sh684},  \cite{sh592} and \cite{sh619}
 for more on the method.
The paper is organized as follows. In Section \ref{perl}, we recall some definitions and results which are used in this paper, and then in Section
\ref{memory}, we develop a general case of countable support memory iteration and prove some results about it. Then in Section
\ref{sectionproof}
we present the proof of Theorem \ref{thm2}.

\section{Some preliminaries}
\label{perl}

In this section we present some definitions and results that we will make use in the next sections.
\subsection{$(\bbP_0, \bbP_1)$-properness}
In this subsection we define the following notion of properness. It will help us to show that some memory iterations of proper forcing notions are
again proper.
\begin{definition}
Suppose  $\bbP_0 \lessdot \bbP_1$ are forcing notions,  and $\name{\bbQ}\in \bold V$ is a $\bbP_0$-names  for a  forcing notion.
We  say that $\name{\bbQ}$ is \emph{$(\bbP_0, \bbP_1)$-proper} if for every large enough regular cardinal $\chi$ and every countable $M\prec \cH(\chi)^{\bold V}$ such that  $\bbP_0$, $\bbP_1, \name{\bbQ}\in M$,
if $(p, \name{q}) \in (\bbP_1 \ast \name{\bbQ}) \cap M$, then there is $\name{q}^*$ such that:
\begin{enumerate}
\item $\name{q}^*$ is a $\bbP_0$-name for an element of $\name{\bbQ}$,

\item If $\bold G$ is $\bbP_1$-generic over $\bold V$ and $\bold V \cap  M[\bold G]=M$ and $p \in \bold G$,
then
\begin{center}
$\bold V[\bold G] \models$``$\name{q}^*_{(\bold G  \cap \bbP_0)}$ \footnote{Here by $\bold G  \cap \bbP_0$ we mean the canonical $\bbP_0$-generic filter obtained from $\bold G$, using the hypothesis $\bbP_0 \lessdot \bbP_1$.} is an $(M[\bold G], \name{\bbQ}_{\bold G})$-generic condition, extending $\name{q}_{\bold G}$''.
\end{center}
\end{enumerate}
\end{definition}

\begin{definition}
Suppose $\name{\bbQ}\in \bold V$ is a $\bbP_0$-names  for a  forcing notion. We say  $\name{\bbQ}$ is \emph{$\bbP_0$-stably  proper} if for each proper forcing notion $\bbP_1$ such that $\bbP_0 \lessdot \bbP_1$
  and $\bbP_1 / \dot{\bold{G}}_{\bbP_0}$ is proper, $\name{\bbQ}$ is  $(\bbP_0, \bbP_1)$-proper.
\end{definition}

\subsection{The $\aleph_2$-properness isomorphism condition}
The following notion is due to Shelah \cite[VIII, Section 2]{Sh:f}.
\begin{definition}  Given a partial order $\bbQ$, we say that \emph{$\bbQ$ has  the $\aleph_2$-properness isomorphism condition ($\aleph_2$-p.i.c.)} in case for every large enough regular cardinal $\theta$ and for any two countable $N_0$, $N_1\prec H(\theta)$ such that $\omega_2$, $\bbQ\in N_0\cap N_1$,  if $\pi:(N_0; \in)\to (N_1; \in)$ is  an isomorphism, $N_0\cap N_1\cap\omega_2$ is a proper initial segment of both $N_0\cap\omega_2$ and $N_1\cap\omega_2$, and $N_0\cap\omega_2\subseteq \min((N_1\cap\omega_2)\setminus N_0)$, then for every $p\in\bbQ\cap N_0$ there is a condition $q\in\bbQ$ extending $p$ and such that
\begin{enumerate}
\item $q\Vdash_{\bbQ}\mbox{``}(\forall r\in N_0\cap\bbQ)(r\in\name G_{\bbQ}\mbox{ iff }\pi(r)\in \name G_{\bbQ})\mbox{''}$,
\item $q\Vdash_{\bbQ} p\in \name G_{\bbQ}$, and
\item $q$ is $(N_0, \bbQ)$-generic.
\end{enumerate}
\end{definition}

It is a standard fact that if $\CH$ holds, then every partial order with the $\aleph_2$-p.i.c.\ has the $\aleph_2$-c.c.  Also, the following is standard and well-known, see \cite[Ch. VIII]{Sh:f}, or the proof of \cite[Theorems 2.10 and 2.12]{abraham}).

\begin{lemma}\label{a2-pic-cc} Suppose $\bbP=\langle \langle \bbP_\alpha: \alpha \leq \delta \rangle, \langle \name{\bbQ}_\beta: \beta < \delta \rangle\rangle$ is a countable support iteration and for each $\beta<\delta$, $\Vdash_{\bbP_\beta}\mbox{`` }\name\bbQ_\beta\mbox{ is proper and has the $\aleph_2$-p.i.c.''}$. Then:

\begin{enumerate}
\item if $\delta<\omega_2$, then $\bbP_\delta$ has the $\aleph_2$-p.i.c.;
\item if $\delta\leq\omega_2$ and $\CH$ holds, then $\bbP_\delta$ has the $\aleph_2$-c.c.;
\item if $\delta < \omega_2$ and $\CH$ holds, then $\CH$ holds in $V^{\bbP_\delta}.$
\end{enumerate}
\end{lemma}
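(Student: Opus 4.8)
The plan is to prove (1) by induction on $\delta$, to deduce the instances $\delta<\omega_2$ of (2) and (3) from (1) together with two ``single partial order'' facts about the $\aleph_2$-p.i.c.\ under $\CH$ (that it implies the $\aleph_2$-c.c.\ --- the standard fact quoted just before the lemma --- and that it implies preservation of $\CH$), and finally to treat the remaining case $\delta=\omega_2$ of (2) by a separate reflection argument, since $\bbP_{\omega_2}$ itself need not carry the p.i.c.

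For (1) I would fix countable $N_0,N_1\prec H(\theta)$ with an isomorphism $\pi:N_0\to N_1$ satisfying the displayed $\omega_2$-configuration and with $\bbP_\delta\in N_0\cap N_1$, and argue by induction on $\delta<\omega_2$. The case $\delta=0$ is trivial. The successor case $\delta=\gamma+1$ combines the induction hypothesis for $\bbP_\gamma$ with an application, in $\bold V^{\bbP_\gamma}$, of the p.i.c.\ of the iterand $\name\bbQ_\gamma$ to the images $N_0[\bold G_\gamma],N_1[\bold G_\gamma]$ and the induced isomorphism: the $(N_0,\bbP_\gamma)$-genericity of the partial master condition is exactly what lets one push $\pi$ and the configuration into the extension, where the $\omega_2$-structure survives because $\bbP_\gamma$ has the $\aleph_2$-c.c. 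The limit case is the technical heart, where one must build an $(N_0,\bbP_\delta)$-generic master condition $q$ that forces clause (1), i.e.\ $r\in\name G\leftrightarrow\pi(r)\in\name G$ for all $r\in N_0\cap\bbP_\delta$. When $\cf(\delta)=\omega$ this is a fusion along a cofinal sequence $\langle\delta_n:n<\omega\rangle\to\delta$ chosen inside $N_0$: one meets the dense subsets of $N_0$ (and, through $\pi$, those of $N_1$) cofinally while pushing the working support through the $\delta_n$ and applying the induction hypothesis at each $\bbP_{\delta_n}$. When $\cf(\delta)>\omega$ the support of any $p\in\bbP_\delta\cap N_0$ is a countable subset of $N_0$ and hence bounded below $\delta$, so the problem reduces to a strictly smaller stage and the induction hypothesis applies directly.

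Once (1) is available, $\bbP_\delta$ has the $\aleph_2$-p.i.c.\ for every $\delta<\omega_2$, so (2) for $\delta<\omega_2$ is immediate from the quoted standard fact. For (3) I would invoke the companion fact that a partial order with the $\aleph_2$-p.i.c.\ preserves $\CH$, applied to $\bbP_\delta$ itself. The mechanism is the same ``invariance under $\pi$'' that underlies the $\aleph_2$-c.c.: given a name $\name x$ for a real and an isomorphism $\pi$ as above sending $\name x$ to $\pi(\name x)$, the p.i.c.\ produces a condition forcing $\name x=\pi(\name x)$, so that the value of a real in the extension depends only on the isomorphism type of the countable structure $(N;\in,\bbP_\delta\cap N,\name x)$ that captures it; under $\CH$ there are only $\aleph_1$ such types, whence $\bold V^{\bbP_\delta}\models 2^{\aleph_0}\le\aleph_1$. (I would not try to route this through a naive count of nice names, since $\aleph_2$-c.c.\ together with a size bound only yields $2^{\aleph_0}\le\aleph_2$; the p.i.c.\ invariance is what sharpens this to $\CH$.)

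The case $\delta=\omega_2$ of (2) is the main obstacle, precisely because $\bbP_{\omega_2}$ may fail the p.i.c.\ and so must be handled by hand. Given $\aleph_2$ conditions, I would attach to each a countable $N_i\prec H(\chi)$ (under a fixed wellorder) containing the condition together with $\bbP_{\omega_2}$ and $\omega_2$; since each support is a countable set lying in its model it is bounded below $\omega_2$. Using $\CH$ to bound by $\aleph_1$ the number of isomorphism types of the captured countable structures, I would thin the family to size $\aleph_2$ on which the transitive collapses agree and on which the sets $N_i\cap\omega_2$ fall into the p.i.c.\ configuration, producing a pair $N_i,N_j$ with an isomorphism $\pi:N_i\to N_j$ carrying the $i$-th condition to the $j$-th. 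One then amalgamates the two into a single condition forcing both into the generic by running the construction of (1) \emph{localized} to these bounded supports: although $\omega_2\in N_i$ --- which is exactly what obstructs a global p.i.c.\ for $\bbP_{\omega_2}$ --- the construction only ever appeals to the p.i.c.\ of iterands $\name\bbQ_\beta$ with $\beta<\omega_2$ and to stages below $\sup(N_i\cap\omega_2)$ (of cofinality $\omega$), so it goes through. The crux, and the step I expect to be most delicate, is reconciling the two conditions at coordinates where $\pi$ is not the identity, where the iterated names depend on earlier coordinates; it is the initial-segment and separation hypotheses on $N_i\cap\omega_2$ in the p.i.c.\ configuration that make this amalgamation succeed and thereby contradict the assumed antichain.
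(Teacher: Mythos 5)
The paper gives no proof of this lemma: it quotes it as standard, pointing to \cite[Ch.\ VIII]{Sh:f} and to the proofs of \cite[Theorems 2.10 and 2.12]{abraham}, and your outline is essentially that standard argument --- induction on $\delta<\omega_2$ for the p.i.c.\ (fusion at limits of cofinality $\omega$, reduction to a bounded stage otherwise), a pigeonhole on isomorphism types of countable hulls under $\CH$ for the $\aleph_2$-c.c.\ at $\delta=\omega_2$ with the amalgamation done at a bounded stage, and $\pi$-invariance of names for $\CH$ preservation. The one place I would tighten the wording is in (3): the value of a real does not literally ``depend only on the isomorphism type'' of the capturing structure (it also depends on the generic), so the argument should be run exactly as in (2) --- given $\aleph_2$ names forced below some $p$ to denote pairwise distinct reals, capture them in countable elementary submodels, thin to an isomorphic pair in the p.i.c.\ configuration carrying one name to the other, and use the p.i.c.\ to produce a condition forcing the two names equal, a contradiction.
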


\section{Shelah's memory iteration}
\label{memory}
In this section we give a general version of  Shelah's memory iteration with countable supports,
and study some of its properties.
Suppose  $\kappa \geq \aleph_2$ is an uncountable regular cardinal. A countable support memory iteration of length $\kappa$ is defined as a sequence
\[
\bold{\mathfrak{q}}= \langle\bbP_\alpha, \name{\bbQ}_\beta, \mathcal{U}_{\beta}: \alpha \leq \kappa, \beta < \alpha \rangle
\]
where:
\begin{enumerate}


\item $\mathcal{U}_{\beta} \in [\beta]^{<\kappa}$;

\item if $\xi \in \mathcal{U}_{\beta}$, then $\mathcal{U}_{\xi}   \subseteq \mathcal{U}_{\beta}$;

\item $\bbP_{\mathcal{U}_\beta}=\{p \in \bbP: \dom(p) \subseteq \mathcal U_\beta      \} \lessdot \bbP_\beta$;

\item $\name{\bbQ}_\beta$ is a $\bbP_{\mathcal U_\beta}$-name for a forcing notion;

    \item for each $\alpha \leq \kappa$ we have
     $p \in \bbP_\alpha$ iff
        \begin{enumerate}
        \item $p$ is a function with $\dom(p)$ a countable subset of $\alpha$;

        \item for each $\beta  \in \dom(p),  p(\beta)$ is a $\bbP_{\mathcal U_\beta}$-name and  $p \restriction \mathcal U_\beta \Vdash_{\bbP_{\mathcal U_\beta}}$``$p(\beta) \in \name{\bbQ}_\beta$'';
        \end{enumerate}
\end{enumerate}

Given $\bbP_\alpha$-conditions $p_0$, $p_1$, we define $p_1\leq_{\bbP_\alpha} p_0$ ($p_1$ extends $p_0$) iff
\begin{enumerate}
\item $\dom(p_0)\subseteq \dom(p_1)$ and
\item for every $\beta\in \dom(p_0)$, $p_0\restriction\beta\Vdash_{\bbP_\beta}\mbox{``}p_1(\beta)\leq_{\name{\bbQ}_\beta}p_0(\beta)\mbox{''}$. \end{enumerate}
We now study the properness condition for such iterations. The following theorem gives a general criteria
for a memory iteration as above to be proper.
\begin{theorem}
\label{properness} Suppose $\bold{\mathfrak{q}}$ is as above, and suppose that for each $\beta<\kappa$,
$\name{\bbQ}_\beta$
 is $(\bbP_{\mathcal U_{\beta}}, \bbP_\beta)$-proper. Then, for each $\alpha \leq \kappa$:
 \begin{enumerate}
 \item[(a)] $\bbP_\alpha$ is proper,
  \item[(b)] $\bbP_\alpha / \dot{\bold{G}}_{\bbP_{\mathcal U_\alpha}}$ is proper.
 \end{enumerate}
\end{theorem}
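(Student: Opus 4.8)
The plan is to prove (a) and (b) simultaneously by induction on $\alpha\le\kappa$, following the template of Shelah's proof that countable support iterations of proper forcings are proper \cite{Sh:f}, but with the fixed complete suborders $\bbP_\gamma$ of the classical argument replaced throughout by the memory suborders $\bbP_{\mathcal U_\beta}$ provided by clause (3) of the definition. Concretely, I would isolate a genericity lemma as the engine of the induction: for every large enough regular $\chi$, every countable $M\prec\cH(\chi)$ with $\bold{\mathfrak q}\in M$, every $\alpha\in M$, and every $p\in\bbP_\alpha\cap M$, there is an $(M,\bbP_\alpha)$-generic condition $q\le p$ — together with a relativised version over $\bbP_{\mathcal U_\alpha}$-generic extensions that yields (b). I expect (a) and (b) to be genuinely intertwined, since establishing (b) at stage $\alpha$ amounts to running the same argument inside a $\bbP_{\mathcal U_\alpha}$-generic extension, and that in turn appeals to (a) and (b) below $\alpha$.

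For the successor step $\alpha=\gamma+1$ I would use the factorisation $\bbP_{\gamma+1}\cong\bbP_\gamma\ast\name{\bbQ}_\gamma$, reading $\name{\bbQ}_\gamma$ as a $\bbP_\gamma$-name via the projection attached to $\bbP_{\mathcal U_\gamma}\lessdot\bbP_\gamma$. Given $M$ and $(p_0,\name q)\in(\bbP_\gamma\ast\name{\bbQ}_\gamma)\cap M$, the induction hypothesis (a) first supplies an $(M,\bbP_\gamma)$-generic $p_0'\le p_0$; then for any $\bbP_\gamma$-generic $\bold G$ with $p_0'\in\bold G$ we have $\bold V\cap M[\bold G]=M$, which is exactly the hypothesis in the definition of $(\bbP_{\mathcal U_\gamma},\bbP_\gamma)$-properness. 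That definition then hands me a $\bbP_{\mathcal U_\gamma}$-name $\name q^{*}$ such that $\name q^{*}_{\bold G\cap\bbP_{\mathcal U_\gamma}}$ is $(M[\bold G],(\name{\bbQ}_\gamma)_{\bold G})$-generic and extends $\name q_{\bold G}$, so that $(p_0',\name q^{*})$ is the required $(M,\bbP_{\gamma+1})$-generic extension. This is precisely the step for which the shape of $(\bbP_0,\bbP_1)$-properness is designed: genericity is demanded over $M[\bold G]$ for the full $\bbP_1=\bbP_\gamma$-generic, while $\name q^{*}$ is only allowed to read the smaller $\bbP_0=\bbP_{\mathcal U_\gamma}$, which is exactly what the memory bookkeeping requires.

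At a limit $\alpha$ of uncountable cofinality the countability of supports makes things comparatively easy: every $p\in\bbP_\alpha\cap M$ has domain bounded strictly below $\alpha$, and genericity over $M$ reduces to a single earlier stage, so this case follows from the inductive hypothesis with only routine adjustments. The hard part will be the case $\cf(\alpha)=\omega$, which I would handle by a fusion argument: fix $\langle\alpha_n:n<\omega\rangle$ increasing and cofinal in $\sup(M\cap\alpha)$ from $M\cap\alpha$, enumerate the dense subsets of $\bbP_\alpha$ in $M$, and build a decreasing sequence $p=p_0\ge p_1\ge\cdots$ with each $p_n\in M$, where at step $n$ I meet the $n$-th dense set and secure $(M,\bbP_{\alpha_n})$-genericity of $p_{n+1}\restriction\alpha_n$ by applying the successor analysis to the block $[\alpha_n,\alpha_{n+1})$. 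The genuine obstacle is to check that the amalgamated limit condition $q$ really is a member of $\bbP_\alpha$: its support must remain countable, and at each $\beta\in\dom(q)$ the value $q(\beta)$ must still be a correct $\bbP_{\mathcal U_\beta}$-name forced into $\name{\bbQ}_\beta$. This is where I expect the coherence clause (2), $\xi\in\mathcal U_\beta\Rightarrow\mathcal U_\xi\subseteq\mathcal U_\beta$, to be indispensable, since it is what makes the system of embeddings $\bbP_{\mathcal U_\beta}\lessdot\bbP_\beta$ cohere well enough for the partial generics produced on the successive blocks to fit together into a single object compatible with every $\bbP_{\mathcal U_\beta}$.

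Finally, for clause (b) — properness of $\bbP_\alpha/\dot{\bold G}_{\bbP_{\mathcal U_\alpha}}$, where $\bbP_{\mathcal U_\alpha}\lessdot\bbP_\alpha$ by clause (3) — I would rerun the whole induction inside a $\bbP_{\mathcal U_\alpha}$-generic extension $\bold V[\bold G_{\mathcal U_\alpha}]$. Choosing the relevant countable submodel $N$ of the extension so that $N\cap\bold V=M$ (that is, extending $M$ $(M,\bbP_{\mathcal U_\alpha})$-generically), the conditions, names and dense sets of the quotient that belong to $N$ pull back to objects handled by the ground-model argument, and the $(M,\bbP_\alpha)$-generic condition produced above projects to an $(N,\bbP_\alpha/\bold G_{\mathcal U_\alpha})$-generic condition. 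Carrying (a) and (b) together through all three cases then closes the induction.
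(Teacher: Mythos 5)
Your proposal is correct and follows essentially the same route as the paper: the paper's own proof is a two-line sketch of exactly this induction, stating the analogous genericity statement $(\ast)_\alpha$, deriving the successor case directly from the $(\bbP_{\mathcal U_\beta}, \bbP_\beta)$-properness hypothesis, and dismissing the limit case as ``the usual proof of properness in limit cases.'' Your write-up is in fact considerably more detailed than the paper's, including an explicit treatment of clause (b), which the paper leaves entirely implicit.
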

\begin{proof}
By induction on $\alpha \leq \kappa$ we show that:
\begin{enumerate}
\item[$(\ast)_\alpha$] if $\beta <\alpha$, $M \prec \cH(\chi)$ is countable, where $\chi > \kappa$ is large enough regular and $\bold{\mathfrak{q}}, \alpha, \beta \in M,$  if $p \in M \cap \bbP_\alpha$ and if $p' \leq p \restriction \beta$ is $(\bbP_\beta, M)$-generic, then there is $p'' \in \bbP_\alpha$
    such that $p''\leq p$ and $p''$ is $(\bbP_\alpha, M)$-generic.
\end{enumerate}
 The successor case $\alpha=\beta+1$ follows from the assumption
$\name{\bbQ}_\beta$
 is $(\bbP_{\mathcal U_{\beta}}, \bbP_\beta)$-proper, and the limit case can be treated as usual proof of properness
 in limit cases.
\end{proof}
In the next theorem we give general conditions under which the iteration has good chain conditions. We first start by fixing some definitions and notations.
\begin{definition}
Suppose  $\bold{\mathfrak{q}}$ is as above.
\begin{enumerate}
\item We say $A \subseteq \kappa$ is $\bold{\mathfrak{q}}$-closed, if
\[
\beta \in A \Rightarrow \mathcal U_\beta \subseteq A.
\]

\item if $\alpha \leq \kappa$ and   $A \subseteq \alpha$ is $\bold{\mathfrak{q}}$-closed, then
\[
\bbP^\alpha_A= \{p \in \bbP_\alpha: \dom(p) \subseteq A  \}.
\]
We also set $\bbP_A=\bbP^\kappa_A.$
\end{enumerate}
It is easily seen that if $A \subseteq \alpha$ is $\bold{\mathfrak{q}}$-closed, then $\bbP^\alpha_A \lessdot \bbP_\alpha.$
\end{definition}
We also have an analogue of Lemma \ref{a2-pic-cc}, whose proof is essentially the same.
\begin{lemma}
\label{lem1} Suppose $\bold{\mathfrak{q}}$ is as above,  $A \subseteq \kappa$ is $\bold{\mathfrak{q}}$-closed
of size $\leq \aleph_1$ and for each $\beta \in A$, $\Vdash_{\bbP_{\mathcal U_\beta}}$`` $\name{\bbQ}_\beta$ satisfies the $\aleph_2$-p.i.c.''.
Then:
\begin{enumerate}
\item $\bbP_A$ is $\aleph_2$-c.c.,

\item If $\CH$ holds,  then it holds in the generic extension by $\bbP_A$.
\end{enumerate}
\end{lemma}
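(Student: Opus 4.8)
The plan is to follow the proof of Lemma \ref{a2-pic-cc} essentially verbatim, the only genuinely new ingredient being the memory structure carried by the sets $\mathcal U_\beta$. Both conclusions will be read off from a single amalgamation statement of $\aleph_2$-p.i.c.\ type for $\bbP_A$: for all large regular $\chi$, all countable $N_0, N_1 \prec \cH(\chi)$ with $\bold{\mathfrak{q}}, A, \omega_2 \in N_0 \cap N_1$ carrying an isomorphism $\pi:(N_0;\in)\to(N_1;\in)$ in the configuration of the $\aleph_2$-p.i.c.\ and fixing the parameters $\bold{\mathfrak{q}}$, $A$, $\omega_2$, and every $p \in \bbP_A \cap N_0$, there is $q \leq p$ that is $(N_0, \bbP_A)$-generic, forces $p \in \name{G}$, and forces $(\forall r \in N_0 \cap \bbP_A)(r \in \name{G} \leftrightarrow \pi(r) \in \name{G})$. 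Fixing $\bold{\mathfrak{q}}$ and $A$ yields the equivariance $\pi(\mathcal U_\beta) = \mathcal U_{\pi(\beta)}$ and guarantees $\pi(p) \in \bbP_A \cap N_1$, so the statement is meaningful.

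Granting this, part (1) is the standard deduction recalled before Lemma \ref{a2-pic-cc}. Working under CH (available throughout the intended application), given $\aleph_2$ conditions I would pass to a subfamily all of whose associated countable structures share one isomorphism type --- there are only $\leq\aleph_1$ of them under CH --- and, after a further thinning arranging the ordinal configuration, obtain $\xi<\eta$ and $\pi:N_\xi\to N_\eta$ as above with $\pi(p_\xi)=p_\eta$. Applying the amalgamation to $r=p_\xi$ produces $q\leq p_\xi$ with $q\Vdash p_\eta\in\name{G}$, so $p_\xi$ and $p_\eta$ are compatible and $\bbP_A$ is $\aleph_2$-c.c. For part (2) I would argue by contradiction: were $2^{\aleph_0}\geq\aleph_2$ forced, fix names $\langle\name{r}_\xi:\xi<\omega_2\rangle$ for pairwise distinct reals, attach a countable $N_\xi\ni\name{r}_\xi,\bold{\mathfrak{q}},A,\omega_2$ to each, and extract $\xi<\eta$ with $\pi:N_\xi\to N_\eta$ sending $\name{r}_\xi$ to $\name{r}_\eta$. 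The coherence clause forced by the corresponding $q$ transfers, for every $n$, the decision ``$n\in\name{r}_\xi$'' to ``$n\in\name{r}_\eta$'' through the antichains deciding it, whence $q\Vdash\name{r}_\xi=\name{r}_\eta$, a contradiction; so CH is preserved.

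The amalgamation statement is where all the work lies, and I would prove it by a recursion mirroring the successor/limit bookkeeping in the proof of Theorem \ref{properness}. Enumerate $(N_0\cup N_1)\cap A$ in increasing ordinal order and build an increasing sequence of partial conditions; at a coordinate $\beta$ in the common part $N_0\cap N_1$ perform one p.i.c.\ amalgamation inside $\bold V^{\bbP_{\mathcal U_\beta}}$ using the hypothesis $\Vdash_{\bbP_{\mathcal U_\beta}}$``$\name{\bbQ}_\beta$ has the $\aleph_2$-p.i.c.'', while a coordinate $\beta\in N_0\setminus N_1$ is handled symmetrically together with its partner $\pi(\beta)\in N_1\setminus N_0$. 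The limit stages and the verification of the three clauses for $q$ are then carried out exactly as in the countable support iteration case.

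I expect the main obstacle --- and the only real departure from Lemma \ref{a2-pic-cc} --- to be that $\name{\bbQ}_\beta$ is a $\bbP_{\mathcal U_\beta}$-name for a scattered, possibly uncountable $\mathcal U_\beta\subseteq A$ rather than a $\bbP_\beta$-name, so before invoking the p.i.c.\ of $\name{\bbQ}_\beta$ at stage $\beta$ I must know that the partial condition already built restricts to a genuine $\bbP_{\mathcal U_\beta}$-condition that is sufficiently generic over the relevant countable models there. This is precisely what axioms (1)--(2) of the memory iteration provide: since $\mathcal U_\beta\subseteq\beta$ and $\xi\in\mathcal U_\beta\Rightarrow\mathcal U_\xi\subseteq\mathcal U_\beta$, the increasing enumeration decides every coordinate of $\mathcal U_\beta$ meeting the support of $q$ before reaching $\beta$, and the equivariance $\pi(\mathcal U_\beta)=\mathcal U_{\pi(\beta)}$ keeps the two sides of the amalgamation aligned throughout the recursion.
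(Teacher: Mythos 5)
Your proposal follows the same route the paper intends: the paper offers no proof of Lemma \ref{lem1} beyond the remark that it is ``essentially the same'' as Lemma \ref{a2-pic-cc}, and your outline --- establish the $\aleph_2$-p.i.c.-style amalgamation for $\bbP_A$ by recursion along $(N_0\cup N_1)\cap A$, then deduce the $\aleph_2$-c.c.\ and CH-preservation by the standard isomorphism-type counting arguments under CH --- is exactly that standard proof transported to the memory setting. You also correctly isolate the only genuinely new point, namely that at stage $\beta$ the p.i.c.\ of $\name{\bbQ}_\beta$ is asserted over $\bbP_{\mathcal U_\beta}$ rather than over $\bbP^\beta_A$, which is precisely where the $\bold{\mathfrak{q}}$-closedness of $A$ (and, implicitly, the $(\bbP_{\mathcal U_\beta},\bbP_\beta)$-properness carried by the ambient hypotheses on $\bold{\mathfrak{q}}$) must enter.
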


   \begin{theorem}
   \label{lem3}
Suppose  $\bold{\mathfrak{q}}$ is as above, and suppose that
 \begin{enumerate}
\item for each $\beta<\kappa$, $\mathcal U_\beta \in [\beta]^{\leq \aleph_1},$

\item for each $\beta<\kappa$, $\Vdash_{\bbP_{\mathcal U_\beta}}$`` $\name{\bbQ}_\beta$ satisfies the $\aleph_2$-p.i.c.''.
\end{enumerate}
Then $\bbP_\kappa$ is $\aleph_2$-c.c.
\end{theorem}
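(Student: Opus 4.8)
The plan is to run the standard argument deriving the $\aleph_2$-chain condition of an iteration from the $\aleph_2$-properness isomorphism condition of its factors, but to carry it out through the small complete suborders $\bbP_A$ supplied by Lemma \ref{lem1}, since one cannot expect $\bbP_\kappa$ itself to satisfy the $\aleph_2$-p.i.c. First I would record the geometry of supports. By clause (2) in the definition of $\bold{\mathfrak q}$ each $\mathcal{U}_\beta$ is already $\bold{\mathfrak q}$-closed, and by hypothesis (1) it has size $\leq\aleph_1$; hence for every $p\in\bbP_\kappa$ the set $A_p:=\dom(p)\cup\bigcup_{\beta\in\dom(p)}\mathcal{U}_\beta$ is $\bold{\mathfrak q}$-closed of size $\leq\aleph_1$ and $p\in\bbP_{A_p}$. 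Thus $\bbP_\kappa$ is the union of the $\aleph_1$-directed family $\{\bbP_A : A\subseteq\kappa\text{ is }\bold{\mathfrak q}\text{-closed},\ |A|\leq\aleph_1\}$ of complete suborders, each $\aleph_2$-c.c.\ by Lemma \ref{lem1}, and since $\bbP_A\lessdot\bbP_\kappa$ compatibility is absolute between $\bbP_A$ and $\bbP_\kappa$. This reduces the task to locating, inside any prescribed family of $\aleph_2$ conditions, two members whose supports can be amalgamated inside a common small $\bbP_A$.

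So suppose towards a contradiction that $\{p_i : i<\omega_2\}$ is an antichain. Fixing a large regular $\chi$, I would choose for each $i$ a countable $N^i\prec\cH(\chi)$ with $\bold{\mathfrak q},\omega_2,p_i\in N^i$. Here we use that CH holds (part of the standing $\GCH$ hypothesis), so that there are only $\aleph_1$ possible isomorphism types for the transitive collapse of the relevant countable data; since $\aleph_1<\omega_2$, an $\omega_2$-sized subfamily shares one type. Among those, a pressing-down and $\Delta$-system argument applied to the countable sets $N^i\cap\omega_2$—exactly as in the standard derivation of the $\aleph_2$-c.c.\ from the $\aleph_2$-p.i.c.\ (see \cite[Ch.\ VIII]{Sh:f} and the proof of \cite[Thm.\ 2.12]{abraham})—produces two indices $i<j$ and an isomorphism $\pi\colon N^i\to N^j$ with $\pi(p_i)=p_j$ arranged in the staircase configuration demanded by the definition of the $\aleph_2$-p.i.c.: $N^i\cap N^j\cap\omega_2$ an initial segment of both $N^i\cap\omega_2$ and $N^j\cap\omega_2$, with $N^i\cap\omega_2\subseteq\min((N^j\cap\omega_2)\setminus N^i)$.

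Next I would build a common extension $q\leq p_i,p_j$ by recursion along the $\bold{\mathfrak q}$-closed support $A:=A_{p_i}\cup A_{p_j}$, which is again $\bold{\mathfrak q}$-closed of size $\leq\aleph_1$, so that the whole construction takes place inside the $\aleph_2$-c.c.\ forcing $\bbP_A$ of Lemma \ref{lem1}. On a coordinate $\beta$ lying in both domains one invokes hypothesis (2), namely that $\name{\bbQ}_\beta$ satisfies the $\aleph_2$-p.i.c., applied to the countable models $N^i,N^j$ and the isomorphism $\pi$, to reconcile $p_i(\beta)$ with $p_j(\beta)=\pi(p_i(\beta))$; on the remaining coordinates one copies the relevant value unchanged. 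This recursion is formally the same as the one proving Lemma \ref{lem1} (and Lemma \ref{a2-pic-cc}), now run inside $\bbP_A$, and it yields a condition $q$ forcing both $p_i$ and $p_j$ into the generic, contradicting that the $p_i$ form an antichain and establishing the theorem.

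The step I expect to be the main obstacle is the coordinatewise amalgamation. Because each value $p_i(\beta)$ is only a $\bbP_{\mathcal{U}_\beta}$-name and $\mathcal{U}_\beta$ may have size $\aleph_1$ and reach into the supports of other conditions, the ``tails'' of distinct conditions are not independent, so the naive union that settles the matter for ordinary countable-support iterations does not by itself produce a condition. This non-locality is precisely why a $\Delta$-system on the countable domains alone is insufficient, and why the $\aleph_2$-p.i.c.\ of the factors—applied along an induction of length $\leq\aleph_1$ rather than in a single step—is needed to force the generic to decide the shared coordinates coherently through the isomorphism $\pi$. The delicate bookkeeping is to keep the $\bold{\mathfrak q}$-closure under control throughout the recursion, so that at each stage one genuinely works inside a complete suborder of the form $\bbP_{\mathcal{U}_\beta}$ to which hypothesis (2) applies; but this is the same bookkeeping already carried out in the proof of Lemma \ref{lem1}.
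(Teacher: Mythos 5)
Your opening paragraph is on the right track and matches the paper's strategy: the proof does go through the small complete suborders $\bbP_A$ and Lemma \ref{lem1}. But the core of your argument then takes a wrong turn, caused by a misdiagnosis. You assert that when the restrictions of two conditions to a common small $\bold{\mathfrak{q}}$-closed set are compatible, ``the naive union does not by itself produce a condition'' because the memories $\mathcal U_\beta$ make the tails interact. This is not so, and that direct amalgamation is precisely the paper's final step. The paper's proof runs: refine $\{\dom(p_\xi):\xi<\omega_2\}$ to a $\Delta$-system with root $d$ (the domains are countable, so $\CH$ gives this); choose the $\bold{\mathfrak{q}}$-closed sets $A_\xi\supseteq\dom(p_\xi)$ increasing, so that a single $A_{i_*}$ of size $\aleph_1$ contains $d$; by Lemma \ref{lem1} the poset $\bbP_{A_{i_*}}$ is $\aleph_2$-c.c., so $p_\xi\restriction A_{i_*}$ and $p_\zeta\restriction A_{i_*}$ are compatible for some $\xi<\zeta$, say with common extension $r\in\bbP_{A_{i_*}}$; and since $\dom(p_\xi)\cap\dom(p_\zeta)=d\subseteq A_{i_*}$, the function $q=r\cup p_\xi\restriction(\dom(p_\xi)\setminus A_{i_*})\cup p_\zeta\restriction(\dom(p_\zeta)\setminus A_{i_*})$ is a condition extending both: at a coordinate $\beta\in\dom(p_\xi)\setminus A_{i_*}$ clause (5)(b) of the definition of the iteration only asks that $q\restriction\mathcal U_\beta$ force $p_\xi(\beta)\in\name{\bbQ}_\beta$, which follows (by induction on $\beta$) from $q\restriction\mathcal U_\beta\leq p_\xi\restriction\mathcal U_\beta$; the memory causes no obstruction because strengthening a condition on $\mathcal U_\beta$ only strengthens what it forces. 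The point you missed is that one does not need the two full supports inside a common small $\bbP_A$ --- only the root of the $\Delta$-system.

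Having discarded this, your replacement argument has genuine gaps. The set $A=A_{p_i}\cup A_{p_j}$ depends on the pair $(i,j)$, so the $\aleph_2$-c.c.\ of $\bbP_A$ says nothing about the compatibility of the two particular conditions $p_i,p_j\in\bbP_A$, and you cannot instead invoke the $\aleph_2$-p.i.c.\ of $\bbP_A$ with the models $N^i,N^j$, since that would require $\bbP_A\in N^i\cap N^j$ while $A$ is not available in $N^i$ (it depends on $j$). The coordinatewise alternative --- applying at each shared $\beta$ ``the $\aleph_2$-p.i.c.\ of $\name{\bbQ}_\beta$ to $N^i,N^j$ and $\pi$'' --- is not a legitimate instance of the definition: the p.i.c.\ of $\name{\bbQ}_\beta$ is a statement about countable elementary submodels of $\cH(\theta)$ as computed in $\bold V^{\bbP_{\mathcal U_\beta}}$, and transporting it to ground-model models along an iteration is exactly the inductive preservation argument that constitutes the (omitted) proofs of Lemma \ref{a2-pic-cc}(1) and Lemma \ref{lem1}; you should use that lemma as a black box rather than re-derive it entangled with the global argument. (There is also the smaller point that $p_j(\beta)=\pi(p_i(\beta))$ at a shared coordinate requires $\pi(\beta)=\beta$, i.e.\ that $\pi$ be the identity on $N^i\cap N^j$, which your configuration must be arranged to guarantee.) None of this machinery is needed: once the root is captured in a single small $\bold{\mathfrak{q}}$-closed set, the chain condition of that one suborder plus the disjointness of the tails finishes the proof.
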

\begin{proof}
Let $\bar p= \langle  p_\xi: \xi < \omega_2     \rangle$
be a set of conditions in $\bbP_{\kappa}$. For each $\xi < \omega_2$
pick a $\bold{\mathfrak{q}}$-closed set $A_\xi$ of size $\aleph_1$ such that
$$\dom(p_\xi) \subseteq A_\xi.$$
We may suppose that $\langle A_\xi: \xi < \omega_2    \rangle$
is increasing.
By shrinking the sequence $\bar p$ if necessary, suppose that
$\{\dom(p_\xi) : \xi < \omega_2\}$ forms a $\Delta$-system with root $d$. Let $i_* < \omega_2$ be such that $d \subseteq A_{i_*}$ . As $|A_{i_*}|\leq \aleph_1$, by Lemma \ref{lem1}(1)
 $\bbP_{A_{i_*}}$ satisfies the $\aleph_2$-c.c., so for some $\xi < \zeta < \omega_2$, $p_\xi \restriction A_{i_*}$ is compatible with
$p_\zeta \restriction A_{i_*}$. But then it is easily seen that $p_\xi$ and $p_\zeta$ are compatible.
\end{proof}

\section{Proof of Theorem \ref{thm2}}
\label{sectionproof}

In this section we prove Theorem \ref{thm2}. Thus suppose $\GCH$ holds and let $\kappa \geq \aleph_2$ be a regular cardinal. Let
$\phi: \kappa \to \cH(\kappa)$
be such that for each $x \in \cH(\kappa)$, the set $\phi^{-1}(x) \subseteq \kappa$
is unbounded.
Let
\[
\bold{\mathfrak{q}}= \langle\bbP_\alpha, \name{\bbQ}_\beta, \mathcal{U}_{\beta}: \alpha \leq \kappa, \beta < \alpha \rangle
\]
be a memory iteration as in Section \ref{memory}, with the following additional properties:
\begin{enumerate}
\item[(6)] for each $\beta < \kappa,$ $\name{\bbQ}_\beta$ is $(\bbP_{\mathcal{U}_\beta}, \bbP_\beta)$ proper and is forced by  $\bbP_{\mathcal{U}_\beta}$  to be  $\aleph_2$-p.i.c.

\item[(7)] Assume $\beta < \kappa.$
\begin{enumerate}
    \item if $\phi(\beta)=(A, \varphi(x, y), \name{x})$, where $A \in [\beta]^{\leq \aleph_1}$ is $(\bold{\mathfrak{q}}\restriction \beta)$-closed,  $\varphi(x, y)$ is a restricted formula in the language of set theory
    and $\name x\in \cH(\kappa)$ is a $\bbP^\beta_{A}$-name for a member of $\cH(\aleph_2)$,
    and if there is  a $\bbP^\beta_{A}$-name  $\name{\bbQ}$ for a  $(\bbP^\beta_{A}, \bbP_\beta)$-proper poset with the $\aleph_2$-p.i.c.\ forcing $\cH(\aleph_2)\models (\exists y)\varphi(\name x, y)$, then $\name{\bbQ}_\beta$ is a name for such a poset. We also set
    $\mathcal U_\beta=A.$


    \item Otherwise we let $\name{\bbQ}_\beta$  be the trivial forcing notion, and set $\mathcal U_\beta=\emptyset.$
    \end{enumerate}
 \end{enumerate}
Let us show that the forcing notion $\bbP=\bbP_\kappa$ is as required.
We can easily show that $2^{\aleph_0}=\kappa$ holds in $V^{\bbP_\kappa}$, thus item $(\dag)$
of Theorem \ref{thm2} is satisfied. Let us show that item $(\ddag)$
holds in $V^{\bbP_\kappa}$.

Thus assume $\varphi(x, y)$ is a restricted formula in the language of set theory, and suppose for every $a\in \cH(\aleph_2)$ and every inner model $M$ of the form $M=\bold V[\bold H]$, where $\bold H$ is $\bbR$-generic over $\bold V$, for some $\bbR \lessdot \bbP$, if $x\in M$, $\omega_1^M=\omega_1$, and $M\models\CH$, then there is an $\bbR$-name $\name{\bbQ}$ such that $\name{\bbQ}$ is $(\bbR, \bbP)$-proper  with the $\aleph_2$-p.i.c.\ and forcing $\cH(\aleph_2)\models (\exists y)\varphi(a, y)$.
Let also $\name x$ be a $\bbP_\kappa$-name for an element of $\cH(\aleph_2)$. Let $\beta < \kappa$ be such that:
\begin{itemize}
\item $\phi(\beta)=(A, \varphi(x, y), \name{x})$,

\item $A \in [\beta]^{\leq \aleph_1}$ is $\bold{\mathfrak{q}}$-closed,

\item $\name x$ is a $\bbP_{A}$-name,

\item there is a $\bbP^\beta_A$-name $\name{\bbQ}$ such that $\name{\bbQ}$ is $(\bbP^\beta_A, \bbP_\beta)$-proper  with the $\aleph_2$-p.i.c.\ and forcing $\cH(\aleph_2)\models (\exists y)\varphi(a, y)$.
\end{itemize}
By Lemmas \ref{lem1} and \ref{lem3} and our assumption, such a $\beta$ exists.
It then follows that $\name{\bbQ}_\beta$ is such a forcing notion. From this,  we can easily conclude the result.
The theorem follows.

\end{document}